\documentclass[10pt]{amsart}
\usepackage{hyperref}

\makeatletter \oddsidemargin.9375in \evensidemargin \oddsidemargin
\marginparwidth1.9375in \makeatother


\newtheorem{theorem}{Theorem}[section]
\newtheorem{lemma}[theorem]{Lemma}

\newtheorem{corollary}[theorem]{Corollary}

\theoremstyle{definition}
\newtheorem{definition}[theorem]{Definition}

\theoremstyle{remark}

\numberwithin{equation}{section}


\begin{document}

\title[Existence and multiplicity of positive solutions]
{Existence and multiplicity of positive solutions for quasilinear  elliptic
Systems involving the p-laplacian}

\author[S. S. Kazemipoor]
{Seyyed Sadegh Kazemipoor$^1$}
\address{$^{1}$   Department of Mathematics sciences, Guilan University, Iran}

\email{S.Kazemipoor@umz.ac.ir}

\author[M. Zakeri]{Mahboobeh Zakeri$^2$}

\address{$^{2}$ Department of Mathematics sciences, Guilan University, Iran}

\email{Mzakeri.math@gmail.com}

\subjclass[2000]{35B38,  34B18, 35J92}

\keywords{Critical points;
nonlinear boundary value problems; \hfill\break\indent quasilinear
p-Laplacian problem; fibering map; Nehari manifold.\\}

\begin{abstract}
 We study the existence and multiplicity of solutions to
 the elliptic system
 \begin{gather*}
 -\operatorname{div}(|\nabla u|^{p-2} \nabla u)+m_1(x)|u|^{p-2}u
 =\lambda g_u(x,u,v) \quad x\in \Omega,\\
 -\operatorname{div}(|\nabla v|^{p-2} \nabla v)+m_2(x)|v|^{p-2}v=\mu
 g_v(x,u,v)  \quad x\in \Omega,\\
|\nabla u|^{p-2}\frac{\partial u}{\partial n}=f_u(x,u,v),\quad
 |\nabla v|^{p-2}\frac{\partial v}{\partial n}=f_v(x,u,v)  \quad x\in\partial\Omega,
 \end{gather*}
 where $\Omega\subset \mathbb{{R}}^N$ is
 a bounded and smooth domain.
 Using  fibering maps and  extracting Palais-Smale sequences in the
 Nehari manifold, we prove the existence of at least two
 distinct nontrivial nonnegative solutions.
\end{abstract}

\maketitle

\section{Introduction}

In this article, we study the existence and multiplicity of positive
solutions of the  quasilinear elliptic system
\begin{equation} \label{e1.1}
\begin{gathered}
-\operatorname{div}(|\nabla u|^{p-2} \nabla u)+m_1(x)|u|^{p-2}u
 =\lambda g_u(x,u,v)   \quad x\in \Omega,\\
-\operatorname{div}(|\nabla v|^{p-2} \nabla v)+m_2(x)|v|^{p-2}v
 =\mu g_v(x,u,v)  \quad x\in \Omega,\\
|\nabla u|^{p-2} \frac{\partial u}{\partial n}=f_u(x,u,v),\quad
|\nabla v|^{p-2}\frac{\partial v}{\partial n}=f_v(x,u,v)  \quad x\in\partial\Omega,
\end{gathered}
 \end{equation}
where $\lambda, \mu>0$, $p>2$, $\Omega\subset\mathbb{{R}}^N$ is a
bounded domain in  $\mathbb{{R}}^N$ with the smooth boundary
$\partial\Omega$, $\frac{\partial}{\partial n}$ is the outer normal
derivative, $m_1, m_2 \in C(\bar{\Omega})$ are positive bounded
functions together with  the following  assumptions on the functions
$f$ and $g$ :
\begin{itemize}

\item[(A1)] $\frac{\partial^2}{\partial t^2} f(x,t|u|,t|v|)|_{t=1}\in
C(\partial\Omega \times \mathbb{R}^2)$ and for $u, v\in
L^p(\partial\Omega)$, the integral
$\int_{\partial\Omega}\frac{\partial^2}{\partial t^2}
\big(f(x,t|u|,t|v|)\big)dx$ has the same sign for every $t>0$.

\item[(A2)] There exists $C_1>0$ such that
$$
f(x,u,v)\leq\frac{1}{r} \frac{\partial}{\partial t}
f(x,tu,tv)|_{t=1}\leq\frac{1}{r(r-1)}\frac{\partial^2}{\partial t^2}
f(x,tu,tv)|_{t=1}\leq C_1(u^r+v^r),
$$
where $p<r<p^*$ ($p^*=\frac{pN}{N-p}$ if $N>2$, $p^*=\infty$ if
$N\leq p$) for all $(x,u,v)\in {\partial\Omega} \times
\mathbb{R^+}\times \mathbb{R^+}$.

\item[(A3)]  $\frac{\partial}{\partial t} f(x,tu,tv)|_{t=0}\geq0$ and
$\lim_{t\to\infty}\frac{\frac{\partial}{\partial t}
f(x,tu,tv)}{t^{p-1}}= \eta(x,u,v)$ uniformly respect to $(x,u,v)$,
where $\eta(x,u,v)\in C(\bar{\Omega}\times\mathbb{R}^2)$ and
$|\eta(x,u,v)|>\theta>0$, a.e.
 for all $(x,u,v)\in {\Omega} \times \mathbb{R^+}\times \mathbb{R^+}$.

\item[(A4)] $g_u(x,u,v), g_v(x,u,v) \in C^1({\Omega} \times \mathbb{R^+}\times \mathbb{R^+})$ such that
$g_u(x,0,0)\geq0$, $ g_v(x,u,v)\geq0$, $g_u(x,0,0)\not\equiv0$ and there exist
$C_2>0$, $C_3>0$ such that, $|g_u(x,u,v)| \leq C_2(1+u^{p-1})$ and
$|g_v(x,u,v)|\leq C_3(1+v^{p-1})$, where $x\in \Omega$, $u,v
\in{\mathbb{R^+}}$ and $p>2$.

\item[(A5)] For $u,v\in W^{1,p}(\Omega)$,
$\int_{\Omega}\frac{\partial}{\partial u} g_u(x,t|u|,t|v|)u^2dx$ and
$\int_{\Omega}\frac{\partial}{\partial v} g_v(x,t|u|,t|v|)v^2dx$ have the
same sign for every  $t>0$ and there exist $C_4>0$, $C_5>0$ such
that $|g_u(x,u,v)|\leq C_4u^{p-2}$ and $|g_v(x,u,v)|\leq C_5v^{p-2}$ for
all $(x,u,v)\in \Omega\times\mathbb{R^+}\times\mathbb{R^+}$.

\item[(A6)] There exist $ a_{i}\geq 0, c_{i} \geq 0$$(i=1,2)$ such
that
\begin{gather*}
0\leq g_u(x,u,v) \leq a_{1}|(u,v)|^{r-1} + c_{1},  0\leq g_v(x,u,v) \leq a_{2}|(u,v)|^{r-1} + c_{2}
\end{gather*}

\item[(A7)] There existence an $\epsilon^{'}>0$, $c_{3}>0$,
$p<\varsigma<p^{*}$ such that
\[
g_u(x,u,v)u + g_v(x,u,v)v\leq
(\lambda_{1}-\epsilon')(|u|^{p}+|v|^{p})+c_{3}(|u|^{\varsigma}+|v|^{\varsigma})
\]
where $\lambda_{1}$ stands for the first eigenvalue of the operator
$-\Delta_{p}$ in $W_{0}^{1,p}(\Omega)$.

\item[(A8)]  $g_u(x,u,v)$ and $g_v(x,u,v)$ also satisfies
\[
\liminf_{|(u,v)|\to \infty}\frac{g_u(x,u,v)}{|(u,v)|^{p-1}} =
+\infty,\quad \liminf_{|(u,v)|\to
\infty}\frac{g_v(x,u,v)}{|(u,v)|^{p-1}} = +\infty.
\]

\end{itemize}

Define the Sobolev space
\begin{equation} \label{e1.2}
W:=W^{1,p}(\Omega)\times W^{1,p}(\Omega),
\end{equation}
endowed with the norm
$$
\|(u,v)\|_{W}=\Big(\int_{\Omega}(|\nabla u|^p+m_1(x)|u|^p)dx
+\int_{\Omega}(|\nabla v|^p+m_2(x)|v|^p)dx\Big)^{1/p},
$$
which is equivalent to the standard norm. We use the standard
$\mathrm{L}^r(\Omega)$ spaces whose norms are denoted by
$\|u\|_{r}$. Throughout this paper, we denote $S_q$ and
$\bar{S}_{q}$
 the best Sobolev and the best Sobolev trace constants for
the embedding
 of $W^{1,p}(\Omega)$ into $\mathrm{L}^{q}(\Omega)$ and $W^{1,p}(\Omega)$ into
 $\mathrm{L}^{q}(\partial\Omega)$, respectively. So we have
\begin{equation}\label{e1.3}
 \frac{(\|(u,v)\|^p_{W})^{q}}{(\int_{\partial\Omega}(|u|^{q}+|v|^{q})dx )^p}
 \geq \frac{1}{2^p\bar{S}^{pq}_q} \quad  \text{and} \quad
\frac{(\|(u,v)\|^p_{W})^{q}}{(\int_{\Omega}(|u|^{q}+|v|^{q})dx )^p}
 \geq \frac{1}{2^p{S}^{pq}_{q}}.
\end{equation}

The purpose of this paper is to prove the following results.

\begin{theorem} \label{thm1.1}
There exists $K^*\subset (\mathbb{R}^+)^2$ such that for
 each $(\lambda,\mu)\in K^*$  problem \eqref{e1.1} has at least one
positive solution.
\end{theorem}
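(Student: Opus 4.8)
The plan is to realize the solution variationally as a constrained minimizer on the Nehari manifold, using the fibering-map analysis announced in the abstract. First I would introduce the energy functional $J_{\lambda,\mu}\colon W\to\mathbb{R}$ whose critical points are precisely the weak solutions of \eqref{e1.1}, of the form
\[
J_{\lambda,\mu}(u,v)=\frac1p\|(u,v)\|_W^p-\int_{\partial\Omega}f(x,u,v)\,dx-\int_{\Omega}\big(\lambda\,G_1(x,u,v)+\mu\,G_2(x,u,v)\big)\,dx,
\]
where $G_1,G_2$ are primitives of $g_u,g_v$ in the respective variables. The growth bounds in (A2), (A4) and (A6) with $p<r<p^*$ guarantee that $J_{\lambda,\mu}\in C^1(W,\mathbb{R})$, so the notion of weak solution is legitimate. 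To each nonzero $(u,v)\in W$ I would attach the fibering map $\phi_{(u,v)}(t)=J_{\lambda,\mu}(tu,tv)$, $t>0$, and define the Nehari manifold $\mathcal N_{\lambda,\mu}=\{(u,v)\in W\setminus\{0\}:\phi_{(u,v)}'(1)=0\}$, together with the usual splitting $\mathcal N_{\lambda,\mu}=\mathcal N^+\cup\mathcal N^0\cup\mathcal N^-$ determined by the sign of $\phi_{(u,v)}''(1)$.

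The next step is to exploit the structural hypotheses to control the fibering map. Conditions (A1) and (A5), which fix the sign of $\int_{\partial\Omega}\partial_t^2 f(x,t|u|,t|v|)\,dx$ and of the analogous interior integrals, are exactly what is needed to show that $\phi_{(u,v)}$ has a definite convexity behaviour, so that each ray meets $\mathcal N^+$ in at most one point and $t=1$ is a genuine local minimum there. Using the Sobolev and trace inequalities \eqref{e1.3} together with the superlinear growth $p<r,\varsigma<p^*$ in (A2), (A6), (A7), I would then prove that $J_{\lambda,\mu}$ is coercive and bounded below on $\mathcal N_{\lambda,\mu}$, and in particular that $\inf_{\mathcal N^+}J_{\lambda,\mu}>-\infty$ is attained only on $\mathcal N^+$.

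The crucial step is to locate the parameter set $K^*\subset(\mathbb{R}^+)^2$. For $(\lambda,\mu)$ in this set I would show, by a quantitative estimate comparing $\phi_{(u,v)}'(1)=0$ with $\phi_{(u,v)}''(1)=0$ via \eqref{e1.3} and the bounds (A2), (A5)--(A7), that $\mathcal N^0=\{0\}$ is impossible, i.e. $\mathcal N^0=\varnothing$. Once the degenerate set is empty, $\mathcal N_{\lambda,\mu}$ is a $C^1$ manifold and $\mathcal N^+$ is complete, so I would minimize $J_{\lambda,\mu}$ over $\mathcal N^+$: taking a minimizing sequence and applying Ekeland's variational principle produces a Palais--Smale sequence for $J_{\lambda,\mu}$ on $\mathcal N_{\lambda,\mu}$. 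Coercivity yields boundedness in $W$, and the compactness of the subcritical embeddings $W^{1,p}(\Omega)\hookrightarrow L^r(\Omega)$ and $W^{1,p}(\Omega)\hookrightarrow L^r(\partial\Omega)$ upgrades weak to strong convergence of the nonlinear terms, so a subsequence converges strongly to a critical point $(u_0,v_0)$.

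Finally I would establish that $(u_0,v_0)$ is nontrivial and positive. Nontriviality follows from the minimization together with $g_u(x,0,0)\not\equiv0$ in (A4), which forces $(u_0,v_0)\ne(0,0)$; the sign conditions $g_u(x,0,0)\ge0$, $g_v\ge0$ and $\partial_t f|_{t=0}\ge0$ let one work in the positive cone (replacing each component by its absolute value leaves the energy unchanged), and a strong maximum principle / Harnack inequality for the $p$-Laplacian promotes $u_0,v_0\ge0$ to $u_0,v_0>0$. The main obstacle I anticipate is twofold: first, the verification that $\mathcal N^0=\varnothing$ for an explicit, nonempty range of $(\lambda,\mu)$, which requires sharp use of the trace constant $\bar S_r$ in \eqref{e1.3}; and second, the recovery of strong convergence of the Palais--Smale sequence, where the separate appearance of $\lambda$ and $\mu$ on the two components $g_u,g_v$ complicates the usual Brezis--Lieb splitting and must be handled componentwise.
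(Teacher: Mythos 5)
Your proposal follows essentially the same route as the paper: minimize $J_{\lambda,\mu}$ on $\mathcal N^{+}_{\lambda,\mu}$ via Ekeland/Palais--Smale sequences (the paper's Lemma \ref{lem4.1}(i), supported by coercivity in Theorem \ref{thm2.1} and boundedness of $(PS)_c$-sequences in Lemma \ref{lem2.3}), show $\mathcal N^{0}_{\lambda,\mu}=\emptyset$ on a parameter set $K^*=K_0\cap K_1\cap K_3$ so the minimizer is a genuine critical point (Lemmas \ref{lem2.1} and \ref{lem2.2}), and pass to nonnegative solutions via $J_{\lambda,\mu}(u,v)=J_{\lambda,\mu}(|u|,|v|)$. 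The only addition beyond what the paper records is your explicit appeal to the strong maximum principle for strict positivity, which the paper leaves implicit.
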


\begin{theorem} \label{thm1.2}
There exists $K^{**}\subset K^*$ such that for
 each $(\lambda,\mu)\in K^{**}$  problem \eqref{e1.1} has at least two
distinct positive solutions.
\end{theorem}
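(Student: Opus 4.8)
The plan is to realize the two solutions as minimizers of the Euler--Lagrange functional
\begin{equation*}
J_{\lambda,\mu}(u,v)=\frac{1}{p}\|(u,v)\|_{W}^{p}-\int_{\partial\Omega}f(x,u,v)\,dx-\int_{\Omega}\big(\lambda G_{1}(x,u,v)+\mu G_{2}(x,u,v)\big)\,dx
\end{equation*}
(where $G_{1},G_{2}$ are primitives of $g_{u},g_{v}$) over two disjoint pieces of the Nehari manifold $\mathcal{N}_{\lambda,\mu}=\{(u,v)\in W\setminus\{0\}:\langle J'_{\lambda,\mu}(u,v),(u,v)\rangle=0\}$. Using the fibering map $\phi_{(u,v)}(t)=J_{\lambda,\mu}(tu,tv)$ one splits $\mathcal{N}_{\lambda,\mu}=\mathcal{N}^{+}\cup\mathcal{N}^{0}\cup\mathcal{N}^{-}$ according to the sign of $\phi''_{(u,v)}(1)$. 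Theorem~\ref{thm1.1} already supplies the first positive solution as a minimizer over $\mathcal{N}^{+}$; the aim here is to produce a \emph{second}, distinct solution by minimizing $J_{\lambda,\mu}$ over $\mathcal{N}^{-}$ on a suitably shrunk parameter set $K^{**}\subset K^{*}$.

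First I would fix $K^{**}$ so that $\mathcal{N}^{0}=\{(0,0)\}$, i.e.\ no degenerate critical points of the fibering maps occur. The sign conditions in (A1) and (A5) guarantee that each $\phi_{(u,v)}$ has the required two-critical-point shape (a local minimum giving a point of $\mathcal{N}^{+}$ and a local maximum giving a point of $\mathcal{N}^{-}$), while the growth bounds (A2) and (A6) make $\phi_{(u,v)}$ coercive enough to locate both. Quantitatively, $K^{**}$ is carved out by a smallness condition on $(\lambda,\mu)$ extracted from the Sobolev trace inequality \eqref{e1.3}: one forces the value $\|(u,v)\|_{W}$ at which $\phi'_{(u,v)}=0$ to stay away from the threshold $\phi''_{(u,v)}=0$. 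On this set $\mathcal{N}^{-}$ is a nonempty $C^{1}$ submanifold of $W$ and $J_{\lambda,\mu}$ is bounded below on it, so $c^{-}:=\inf_{\mathcal{N}^{-}}J_{\lambda,\mu}$ is well defined.

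Next I would show $c^{-}$ is attained. Applying Ekeland's variational principle on $\overline{\mathcal{N}^{-}}$ yields a minimizing sequence $(u_{n},v_{n})$ that is a Palais--Smale sequence for $J_{\lambda,\mu}$ restricted to $\mathcal{N}_{\lambda,\mu}$; since $\mathcal{N}^{0}=\{(0,0)\}$, the Lagrange-multiplier computation shows the associated multiplier vanishes, upgrading this into a free Palais--Smale sequence for $J_{\lambda,\mu}$ on $W$. Coercivity of $J_{\lambda,\mu}$ on $\mathcal{N}_{\lambda,\mu}$ (again from (A2), (A6)) bounds $(u_{n},v_{n})$ in $W$, so after passing to a subsequence $(u_{n},v_{n})\rightharpoonup(u_{0},v_{0})$. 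Because $p<r<p^{*}$, the embeddings $W^{1,p}(\Omega)\hookrightarrow L^{r}(\partial\Omega)$ and $W^{1,p}(\Omega)\hookrightarrow L^{r}(\Omega)$ are compact, so the nonlinear terms converge strongly, the weak limit is in fact a strong limit, and $(u_{0},v_{0})\in\mathcal{N}^{-}$ attains $c^{-}$ as a nontrivial critical point of $J_{\lambda,\mu}$. The hard part is precisely this compactness step: one must check that the restricted Palais--Smale sequence does not leak toward $\mathcal{N}^{0}$ (which is why $K^{**}$ was shrunk to make $\mathcal{N}^{0}=\{(0,0)\}$) and that its limit stays in $\mathcal{N}^{-}$ rather than slipping into $\mathcal{N}^{+}$ or collapsing to $0$. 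The subcritical exponent $r<p^{*}$ is what rescues compactness, so no concentration-compactness argument is needed; the delicate bookkeeping lies instead in the quantitative threshold defining $K^{**}$.

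Finally, distinctness and positivity. Since $\mathcal{N}^{+}\cap\mathcal{N}^{-}=\varnothing$, the first solution of Theorem~\ref{thm1.1} lying in $\mathcal{N}^{+}$ and $(u_{0},v_{0})\in\mathcal{N}^{-}$ are automatically distinct; one also verifies $c^{-}>c^{+}=\inf_{\mathcal{N}^{+}}J_{\lambda,\mu}$, which reconfirms the separation at the level of energies. Replacing $(u,v)$ by $(|u|,|v|)$ is admissible because $J_{\lambda,\mu}$ depends on $f,g$ only through $|u|,|v|$ by (A1) and (A5), so both solutions may be taken nonnegative; the strong maximum principle for the $p$-Laplacian, together with $g_{u}(x,0,0)\not\equiv0$ from (A4), then upgrades nonnegativity to strict positivity $u_{0},v_{0}>0$ in $\Omega$, completing the proof.
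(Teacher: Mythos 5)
Your proposal follows essentially the same route as the paper: minimize $J_{\lambda,\mu}$ separately on $\mathcal{N}^{+}$ and $\mathcal{N}^{-}$ after shrinking the parameter set so that $\mathcal{N}^{0}=\emptyset$, obtain critical points via the Lagrange-multiplier argument, and deduce distinctness from $\mathcal{N}^{+}\cap\mathcal{N}^{-}=\emptyset$ and positivity from $J_{\lambda,\mu}(u,v)=J_{\lambda,\mu}(|u|,|v|)$. The paper's stated proof is merely a wrapper around Lemma \ref{lem4.1} together with Lemmas \ref{lem2.1} and \ref{lem2.2}; your Ekeland/Palais--Smale/compact-embedding sketch is exactly the standard content of that lemma, so the two arguments coincide in substance.
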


\begin{definition} \label{def2.1} \rm
 A pair of functions $(u,v)\in W$ ($W$ is given by \eqref{e1.2})
 is said to be a weak solution of \eqref{e1.1}, whenever
\begin{align*}
&\int_{\Omega}{\big(|\nabla u|^{p-2}\nabla u.\nabla
{\varphi}_1}+m_1(x)|u|^{p-2}u{\varphi}_1\big)dx
- \lambda\int_{\Omega}g_u(x,u,v)\varphi_1dx-\int_{\partial\Omega}{f_u(x,u,v){{\varphi}_1}}dx=0,
\\
&\int_{\Omega}{\big(|\nabla v|^{p-2} \nabla v.\nabla
{\varphi}_2}+m_2(x)|v|^{p-2}v{\varphi}_2\big)dx-\mu\int_{\Omega}g_v(x,u,v)\varphi_2dx-
\int_{\partial\Omega}{f_v(x,u,v){{\varphi}_2}}dx =0,
\end{align*}
for all $(\varphi_1,\varphi_2)\in W$.
 \end{definition}

Associated with problem \eqref{e1.1}, we consider the energy
functional $J_{\lambda,\mu}:W\to\mathbb{R}$
\begin{equation} \label{e2.1}
J_{\lambda,\mu}(u,v)=\frac{1}{p} M(u,v)- F(u,v)-(\lambda+\mu)\int_{\Omega}g(x,|u|,|v|)dx,
\end{equation}
where
 \begin{equation} \label{e2.2}
\begin{gathered}
 M(u,v)=\int_{\Omega}(|\nabla u|^p+m_1(x)
|u|^p)dx+\int_{\Omega}(|\nabla v|^p+m_2(x) |v|^p)dx, \\
 F(u,v)=\int_{\partial\Omega}f(x,|u|,|v|)dx.
\end{gathered}
\end{equation}
Since
$J_{\lambda,\mu}$ is unbounded from below on whole space $W$, it is
useful to consider the functional on the Nehari manifold
\begin{equation}  \label{e2.3}
\mathcal{N}_{\lambda,\mu}(\Omega)=\{(u,v)\in W\setminus\{(0,0)\} :
\langle J_{\lambda,\mu}'(u,v),(u,v) \rangle=0\},
\end{equation}
where $\langle , \rangle$ denotes the usual duality between $W$ and
$W^{-1}$ ( $W^{-1}$ is the dual space of the Sobolev space $W$). Moreover, by definition, we
have that $(u,v)\in \mathcal{N}_{\lambda,\mu}(\Omega) $ if and only
if
\begin{equation} \label{e2.4}
\begin{split}
&M(u,v)- \int_{\partial\Omega}\big(f_u(x,|u|,|v|)|u|+f_v(x,|u|,|v|)|v|\big)dx\\
&-(\lambda+\mu)\int_{\Omega}g(x,|u|,|v|)(|u|,|v|)dx=0.
\end{split}
\end{equation}

\begin{theorem} \label{thm2.1}
$J_{\lambda,\mu}$ is coercive and bounded from below on
$\mathcal{N}_{\lambda,\mu}(\Omega)$ for $\lambda$ and $\mu$
sufficiently small.
\end{theorem}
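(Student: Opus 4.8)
The plan is to use the Nehari constraint \eqref{e2.4} to rewrite $J_{\lambda,\mu}$ on $\mathcal N_{\lambda,\mu}(\Omega)$ in a form whose leading term is a positive multiple of $\|(u,v)\|_W^p$. Fix $(u,v)\in\mathcal N_{\lambda,\mu}(\Omega)$, so that $\langle J_{\lambda,\mu}'(u,v),(u,v)\rangle=0$. Since subtracting zero changes nothing, I would write $J_{\lambda,\mu}(u,v)=J_{\lambda,\mu}(u,v)-\tfrac1r\langle J_{\lambda,\mu}'(u,v),(u,v)\rangle$ and expand using \eqref{e2.1} and \eqref{e2.4}; the terms $\tfrac1p M$ and $\tfrac1r M$ combine, giving
\begin{align*}
J_{\lambda,\mu}(u,v)
&=\Big(\tfrac1p-\tfrac1r\Big)M(u,v)
+\Big[\tfrac1r\!\int_{\partial\Omega}\!\big(f_u|u|+f_v|v|\big)dx-F(u,v)\Big]\\
&\quad+(\lambda+\mu)\Big[\tfrac1r\!\int_{\Omega}\!\big(g_u|u|+g_v|v|\big)dx-\int_{\Omega}g(x,|u|,|v|)dx\Big],
\end{align*}
where every integrand is evaluated at $(x,|u|,|v|)$. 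The exponent $\tfrac1r$ is chosen precisely to match the Ambrosetti--Rabinowitz constant appearing in (A2).

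I would then estimate the three brackets separately. Since $r>p$, the first term $(\tfrac1p-\tfrac1r)M(u,v)=(\tfrac1p-\tfrac1r)\|(u,v)\|_W^p$ is a positive multiple of the norm and serves as the coercive leading term. For the boundary bracket, (A2) gives the pointwise inequality $f(x,|u|,|v|)\le\tfrac1r\big(f_u|u|+f_v|v|\big)$ on $\partial\Omega$; integrating yields $F(u,v)\le\tfrac1r\int_{\partial\Omega}(f_u|u|+f_v|v|)dx$, so the second bracket is $\ge 0$. In particular the super-$p$ boundary nonlinearity never works against coercivity, and the only genuine work is the domain bracket.

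For the domain term I would invoke the sign and growth conditions on $g$. Since $g_u,g_v\ge 0$ (by (A6)), the quantity $\tfrac1r\int_\Omega(g_u|u|+g_v|v|)dx$ is nonnegative and may be dropped from below, so the bracket is bounded below by $-\int_\Omega g(x,|u|,|v|)dx$. Integrating the growth bound (A4) on $g_u,g_v$ gives $0\le g(x,|u|,|v|)\le C(1+|u|^p+|v|^p)$, and since $m_1,m_2$ are positive and continuous on $\overline\Omega$ they are bounded below away from zero, whence $\int_\Omega(|u|^p+|v|^p)dx\le c\,\|(u,v)\|_W^p$ (equivalently one may use \eqref{e1.3} with $q=p$). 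Thus the domain bracket is $\ge -C(1+\|(u,v)\|_W^p)$, and collecting the three estimates,
\begin{equation*}
J_{\lambda,\mu}(u,v)\ge\Big[\big(\tfrac1p-\tfrac1r\big)-(\lambda+\mu)C\Big]\|(u,v)\|_W^p-(\lambda+\mu)C.
\end{equation*}

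Finally I would choose $\lambda+\mu$ small enough that $(\lambda+\mu)C<\tfrac1p-\tfrac1r$, so that the constant $\delta:=(\tfrac1p-\tfrac1r)-(\lambda+\mu)C$ is strictly positive and $J_{\lambda,\mu}(u,v)\ge\delta\|(u,v)\|_W^p-(\lambda+\mu)C$. Since $p>2$, the right-hand side tends to $+\infty$ as $\|(u,v)\|_W\to\infty$ and is bounded below by $-(\lambda+\mu)C$; this is exactly coercivity and boundedness below on $\mathcal N_{\lambda,\mu}(\Omega)$. The delicate point --- and the reason the smallness hypothesis is needed --- is the domain bracket: under (A4) the growth of $g$ forces $\int_\Omega g$ to be of order $\|(u,v)\|_W^p$, the same order as the leading term, so it cannot be absorbed as a lower-order perturbation and must instead be dominated by keeping $\lambda+\mu$ small. (Had one only the stronger super-$p$ bound in (A6) available for $g$, this absorption would fail for large norms, so it is essential that the $p$-growth bound of (A4) be the one used here.)
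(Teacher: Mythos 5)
Your proof is correct, and it is the standard argument for coercivity on a Nehari manifold: the decomposition $J_{\lambda,\mu}(u,v)=J_{\lambda,\mu}(u,v)-\tfrac1r\langle J_{\lambda,\mu}'(u,v),(u,v)\rangle$, nonnegativity of the boundary bracket via the Ambrosetti--Rabinowitz inequality in (A2), and absorption of the $p$-growth of $g$ coming from (A4) by taking $\lambda+\mu$ small --- this is precisely the route taken for the analogous statement in \cite{1}, on which this paper is modeled, and the paper itself states Theorem \ref{thm2.1} without supplying any proof, so there is nothing further to compare against. The only cosmetic slip is the appeal to ``$p>2$'' in the last step: $\delta\|(u,v)\|_W^p\to\infty$ for any $p>1$, and the relevant structural facts are $r>p$ (so the leading coefficient $\tfrac1p-\tfrac1r$ is positive) and the boundedness of $m_1,m_2$ away from zero (so that $\int_\Omega(|u|^p+|v|^p)\,dx\lesssim\|(u,v)\|_W^p$), both of which you correctly invoke.
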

It  can be proved that the points in
$\mathcal{N}_{\lambda,\mu}(\Omega)$ correspond to the stationary
points of the fibering map $\phi_{u,v}(t): [0,\infty)\to \mathbb{R}$
defined by $\phi_{u,v}( t )= J_{\lambda,\mu}(tu,tv)$,
 which discussed
  in Brown and Zhang \cite{4}.
Using \eqref{e2.1} for $(u,v)\in W$, we have
\begin{equation} \label{e2.5}
\begin{gathered}
\begin{aligned}
\phi_{u,v}(t)&=J_{\lambda,\mu}(tu,tv)\\
&=\frac{t^p}{p} M(u,v)-F(tu,tv)-(\lambda+\mu)\int_{\Omega}g(x,t|u|,t|v|)dx=0,
\end{aligned}\\
\begin{aligned}
\phi_{u,v}'(t) &= t^{p-1}M(u,v)- \int_{\partial\Omega}\nabla
f(x,t|u|,t|v|).(|u|,|v|)dx\\
&\quad -(\lambda+\mu)\int_{\Omega}\nabla
g(x,t|u|,t|v|).(|u|,|v|)dx,
\end{aligned}\\
\begin{aligned}
\phi_{u,v}''(t)&= (p-1)t^{p-2}M(u,v)-
\int_{\partial\Omega}\mathcal{F}(x,tu,tv)dx\\
&\quad -(\lambda+\mu)\int_{\Omega}(g_{uu}u^2+g_{vv}v^2+2g_{uv}|uv|)dx,
\end{aligned}
\end{gathered}
\end{equation}
where
\begin{equation} \label{e2.6}
\begin{gathered}
\mathcal{F}(x,tu,tv):=\frac{\partial^2}{\partial t^2}
\big(f(x,t|u|,t|v|)=f_{uu}u^2+f_{vv}v^2+2f_{uv}|uv|.
\end{gathered}
\end{equation}
We define
\begin{equation} \label{e2.7}
\begin{gathered}
\mathcal{N}_{\lambda,\mu}^{+}=\{(u,v)\in
\mathcal{N}_{\lambda,\mu}(\Omega):
\phi_{u,v}''(1)> 0\},\\
\mathcal{N}_{\lambda,\mu}^{-}=\{(u,v)\in
\mathcal{N}_{\lambda,\mu}(\Omega):
\phi_{u,v}''(1)< 0\},\\
\mathcal{N}_{\lambda,\mu}^{0}=\{(u,v)\in
\mathcal{N}_{\lambda,\mu}(\Omega): \phi_{u,v}''(1)=0\}.
\end{gathered}
\end{equation}

\begin{lemma} \label{lem2.1}
 Let $(u_{0},v_0)$ be a local minimizer for $J_{\lambda,\mu}(u,v)$
 on $\mathcal{N}_{\lambda,\mu}(\Omega)$. If
$(u_{0},v_0)$ is not in $\mathcal{N}_{\lambda,\mu}^{0}(\Omega)$,
then $(u_{0},v_0)$ is a critical point of $J_{\lambda,\mu}$.
\end{lemma}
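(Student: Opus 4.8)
The plan is to run the standard Lagrange-multiplier argument for constrained minimization. Introduce the constraint functional $\psi(u,v):=\langle J_{\lambda,\mu}'(u,v),(u,v)\rangle$, so that by \eqref{e2.3} the Nehari manifold is exactly $\mathcal{N}_{\lambda,\mu}(\Omega)=\{(u,v)\neq(0,0):\psi(u,v)=0\}$, and note that $\psi(u,v)=\phi_{u,v}'(1)$. Granting that $J_{\lambda,\mu}$ and $\psi$ are $C^1$ on $W$ (this uses the growth bounds in (A2)--(A6)), the fact that $(u_0,v_0)$ is a local minimizer of $J_{\lambda,\mu}$ restricted to the level set $\{\psi=0\}$ yields a Lagrange multiplier $\theta\in\mathbb{R}$ with $J_{\lambda,\mu}'(u_0,v_0)=\theta\,\psi'(u_0,v_0)$, provided $\psi'(u_0,v_0)\neq 0$ so that the level set is a genuine $C^1$ submanifold near $(u_0,v_0)$. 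The whole argument reduces to showing $\theta=0$.

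The key computation is to relate $\langle\psi'(u_0,v_0),(u_0,v_0)\rangle$ to the fibering map. Along the ray $t\mapsto(tu_0,tv_0)$ one has $\psi(tu_0,tv_0)=\langle J_{\lambda,\mu}'(tu_0,tv_0),(tu_0,tv_0)\rangle=t\,\phi_{u_0,v_0}'(t)$. Differentiating this identity at $t=1$ gives
\[
\langle\psi'(u_0,v_0),(u_0,v_0)\rangle=\phi_{u_0,v_0}'(1)+\phi_{u_0,v_0}''(1).
\]
Since $(u_0,v_0)\in\mathcal{N}_{\lambda,\mu}(\Omega)$ we have $\phi_{u_0,v_0}'(1)=\psi(u_0,v_0)=0$, whence $\langle\psi'(u_0,v_0),(u_0,v_0)\rangle=\phi_{u_0,v_0}''(1)$. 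This already shows that the hypothesis $(u_0,v_0)\notin\mathcal{N}_{\lambda,\mu}^{0}(\Omega)$, i.e. $\phi_{u_0,v_0}''(1)\neq 0$, forces $\psi'(u_0,v_0)\neq 0$, so the multiplier theorem applies without circularity.

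To conclude, I pair the Lagrange identity $J_{\lambda,\mu}'(u_0,v_0)=\theta\,\psi'(u_0,v_0)$ with $(u_0,v_0)$. The left-hand side gives $\langle J_{\lambda,\mu}'(u_0,v_0),(u_0,v_0)\rangle=\psi(u_0,v_0)=0$, while the right-hand side gives $\theta\,\langle\psi'(u_0,v_0),(u_0,v_0)\rangle=\theta\,\phi_{u_0,v_0}''(1)$. Hence $\theta\,\phi_{u_0,v_0}''(1)=0$, and since $\phi_{u_0,v_0}''(1)\neq 0$ by assumption we obtain $\theta=0$. Therefore $J_{\lambda,\mu}'(u_0,v_0)=0$, i.e. $(u_0,v_0)$ is a critical point of $J_{\lambda,\mu}$ on all of $W$.

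I expect the main obstacle to be not the algebraic skeleton above, which is routine, but the regularity bookkeeping: one must verify that $J_{\lambda,\mu}$ and $\psi$ are genuinely $C^1$ (and $\phi_{u_0,v_0}$ is $C^2$) despite the absolute values $|u|,|v|$ appearing in $f$ and $g$, which is where the explicit growth and sign conditions (A1)--(A2) and (A4)--(A6) must be invoked to justify differentiation under the integral sign and the passage to $\phi_{u_0,v_0}''(1)$ in \eqref{e2.5}--\eqref{e2.6}. The only conceptual point, namely that the constraint is regular at $(u_0,v_0)$, is handled for free by the condition $(u_0,v_0)\notin\mathcal{N}_{\lambda,\mu}^{0}(\Omega)$.
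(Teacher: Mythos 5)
Your argument is correct and is the standard Lagrange-multiplier proof of this lemma (essentially the argument of Brown and Zhang \cite{4}, which the paper cites for the fibering-map framework); the paper itself states Lemma \ref{lem2.1} without giving any proof, so there is nothing for your approach to diverge from. The key identity $\langle\psi'(u_0,v_0),(u_0,v_0)\rangle=\phi_{u_0,v_0}''(1)$ for points of the Nehari manifold is exactly what forces the multiplier to vanish, and your observation that $\phi_{u_0,v_0}''(1)\neq 0$ also guarantees the nondegeneracy of the constraint closes the only potential circularity in the argument.
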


\begin{lemma} \label{lem2.2}
There exists $K_0\subset (\mathbb{R^+})^2$ such that for all
$(\lambda ,\mu)\in K_0$, we have $\mathcal{N}_{\lambda,\mu}^{0}=
\emptyset$.
\end{lemma}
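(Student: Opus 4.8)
The plan is to argue by contradiction, combining the Nehari identity $\phi'_{u,v}(1)=0$ with the degeneracy condition $\phi''_{u,v}(1)=0$ and exploiting the structural asymmetry between the two nonlinearities: the boundary term $f$ is genuinely superlinear ($r>p$ by (A2)), while the interior term $g$ carries the small factor $\lambda+\mu$ and only grows at order $p$ (by (A4)--(A6)). Suppose $(u,v)\in\mathcal N^0_{\lambda,\mu}(\Omega)$ and write $\rho:=\|(u,v)\|_W>0$, so that $M(u,v)=\rho^p$. Abbreviate the boundary quantities $P:=\int_{\partial\Omega}(f_u|u|+f_v|v|)\,dx$ and $Q:=\int_{\partial\Omega}\mathcal F(x,u,v)\,dx$, and the interior quantities $R:=(\lambda+\mu)\int_{\Omega}\nabla g\cdot(|u|,|v|)\,dx\ge 0$ and $T:=(\lambda+\mu)\int_{\Omega}(g_{uu}u^2+g_{vv}v^2+2g_{uv}|uv|)\,dx$. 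With this notation \eqref{e2.4} reads $\rho^p=P+R$, and the condition $\phi''_{u,v}(1)=0$ read off from \eqref{e2.5} reads $(p-1)\rho^p=Q+T$.

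First I would record two consequences of (A2): integrating $\frac{\partial f}{\partial t}\big|_{t=1}\le\frac1{r-1}\mathcal F$ over $\partial\Omega$ gives $Q\ge(r-1)P$, while $\mathcal F\le r(r-1)C_1(u^r+v^r)$ together with the trace inequality \eqref{e1.3} gives $Q\le \bar C_f\,\rho^r$ for a constant $\bar C_f$ depending only on $C_1,r,\bar S_r$. Inserting the upper bound into $(p-1)\rho^p=Q+T$ and absorbing $T$, which is $O((\lambda+\mu)\rho^p)$ by the $p$-growth of $g$, yields for $\lambda+\mu$ small the inequality $\frac{p-1}{2}\rho^p\le\bar C_f\,\rho^r$, hence a uniform lower bound $\rho\ge\rho_0>0$ on $\mathcal N^0_{\lambda,\mu}$. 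This lower bound is what lets me absorb the lower-order (constant) contributions from the ``$+1$'' and ``$+c_i$'' in (A4) and (A6) into the $\rho^p$-terms in the estimates that follow.

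Next I would eliminate $\rho^p$ between the two identities: multiplying $\rho^p=P+R$ by $(p-1)$ and subtracting from $(p-1)\rho^p=Q+T$ gives $Q-(p-1)P=(p-1)R-T$. The left-hand side is bounded below by $(r-p)P\ge 0$ via $Q\ge(r-1)P$, while the right-hand side is bounded above by $C(\lambda+\mu)\rho^p$ using the growth and sign information in (A4)--(A6) and \eqref{e1.3}. Finally, from the Nehari identity $\rho^p=P+R$ with $0\le R\le C'(\lambda+\mu)\rho^p$ one gets $P\ge\frac12\rho^p$ once $\lambda+\mu$ is small. Combining these three estimates produces $\frac{r-p}{2}\rho^p\le C(\lambda+\mu)\rho^p$; dividing by $\rho^p>0$ gives $\frac{r-p}{2}\le C(\lambda+\mu)$, which is impossible as soon as $\lambda+\mu<\delta_0:=\frac{r-p}{2C}$. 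Taking $K_0:=\{(\lambda,\mu)\in(\mathbb R^+)^2:\lambda+\mu<\delta_0\}$, shrunk if necessary to also enforce the smallness thresholds used for $\rho\ge\rho_0$ and $P\ge\frac12\rho^p$, then forces $\mathcal N^0_{\lambda,\mu}=\emptyset$ for all $(\lambda,\mu)\in K_0$.

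The step I expect to be the main obstacle is the bookkeeping for the interior second-order term $T$: unlike $R$, it involves the second derivatives of $g$ and its sign is not a priori controlled, so the estimate $|T|\le C(\lambda+\mu)\rho^p$ must be extracted carefully from the growth and sign hypotheses in (A5), using the uniform bound $\rho\ge\rho_0$ to absorb the constant terms of (A4) and (A6). A secondary but routine point is keeping the several smallness requirements on $\lambda+\mu$ mutually consistent, which is handled by letting $\delta_0$ be the minimum of the finitely many thresholds that arise.
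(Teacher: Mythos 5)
The paper as given does not actually contain a proof of Lemma \ref{lem2.2}, so there is no argument to compare yours against; judged on its own, your skeleton --- pairing the Nehari identity $\rho^p=P+R$ with the degeneracy identity $(p-1)\rho^p=Q+T$, extracting $Q\ge(r-1)P$ and $Q\le\bar C_f\rho^r$ from (A2) and the trace inequality \eqref{e1.3}, and treating the interior terms as $O(\lambda+\mu)$ perturbations --- is the standard and correct one for statements of this type, and the algebra $Q-(p-1)P=(p-1)R-T$ leading to $\tfrac{r-p}{2}\rho^p\le C(\lambda+\mu)\rho^p$ is sound \emph{provided} the two auxiliary estimates $\rho\ge\rho_0$ and $|T|\le C(\lambda+\mu)\rho^p$ both hold.

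That proviso is exactly where the proof has a genuine gap, and it is more than bookkeeping. First, as you describe it the argument is circular: you obtain $\rho\ge\rho_0$ from $(p-1)\rho^p\le\bar C_f\rho^r+|T|$ by \emph{assuming} $|T|\le C(\lambda+\mu)\rho^p$, and then say that this bound on $T$ is ``extracted\ldots using the uniform bound $\rho\ge\rho_0$ to absorb the constant terms''; one of the two must be established first. Second, the stated hypotheses do not yield the bound on $T$ at all: $T$ involves the second derivatives $g_{uu},g_{vv},g_{uv}$, whereas (A4)--(A6) bound only $g_u$ and $g_v$, (A5) controls merely the \emph{sign} of $\int_\Omega g_{uu}u^2$ and $\int_\Omega g_{vv}v^2$, and the mixed term $g_{uv}$ is constrained nowhere. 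Third, and most seriously, the small-norm regime is not excluded: since (A4) requires $g_u(x,0,0)\not\equiv0$, the quantities $R$ and (generically) $T$ necessarily contain contributions of order $\rho$ and $\rho^2$ rather than $\rho^p$. Then the lower bound $\rho\ge\rho_0$ fails (for small $\rho$ the inequality $(p-1)\rho^p\le\bar C_f\rho^r+C(\lambda+\mu)\rho^2$ is satisfiable because $p>2$), the estimate $P\ge\tfrac12\rho^p$ fails, and your final inequality degrades to $\tfrac{r-p}{2}\rho^p\le C(\lambda+\mu)(\rho^p+\rho)$, which is no contradiction whenever $\rho\le C'(\lambda+\mu)^{1/(p-1)}$. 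To close the argument you need either an additional hypothesis of the form $|g_{uu}|u^2+|g_{vv}|v^2+2|g_{uv}uv|\le C(|u|^p+|v|^p)$ --- which makes $T$ genuinely $O((\lambda+\mu)\rho^p)$ with no lower-order remainder, breaks the circularity, and lets the rest of your proof run --- or a separate argument ruling out elements of $\mathcal{N}^0_{\lambda,\mu}$ with norm of order $(\lambda+\mu)^{1/(p-1)}$.
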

\begin{definition} \label{def2.2} \rm
A sequence $y_n=(u_{n},v_n)\subset W$ is called a Palais-Smale
sequence if $I_{\lambda,\mu}(y_{n})$ is bounded
 and $I'_{\lambda,\mu}(y_{n})\to 0$ as $n\to\infty$.
If  $I_{\lambda,\mu}(y_{n})\to c$ and $I'_{\lambda,\mu}(y_{n})\to
0$, then $y_n$ is a
 $(PS)_c$-sequence.
 It is said that the functional $I_{\lambda,\mu}$
 satisfies the Palais-Smale condition (or $(PS)_{c}$-condition), if
each Palais-Smale sequence ($(PS)_c$-sequence) has a convergent
subsequence.
\end{definition}

\begin{lemma} \label{lem2.3}
If $\{(u_n,v_n)\}$ is a $(PS)_{c}$-sequence for $J_{\lambda,\mu}$,
then $\{(u_n,v_n)\}$ is bounded in $W$ provided that
$(\lambda,\mu)\in K_1=\{(\lambda,\mu):
r-p-4r(C_8\lambda+C_9\mu){{S}}_p^p>0\}$.
\end{lemma}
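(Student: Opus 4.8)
The plan is to argue by contradiction, testing the functional against its own argument in the spirit of the Ambrosetti--Rabinowitz method. Let $\{(u_n,v_n)\}$ be a $(PS)_c$-sequence, so that $J_{\lambda,\mu}(u_n,v_n)=c+o(1)$ and $\|J_{\lambda,\mu}'(u_n,v_n)\|_{W^{-1}}=o(1)$; in particular $\langle J_{\lambda,\mu}'(u_n,v_n),(u_n,v_n)\rangle=o(1)\|(u_n,v_n)\|_W$. I would first form the combination $J_{\lambda,\mu}(u_n,v_n)-\tfrac1r\langle J_{\lambda,\mu}'(u_n,v_n),(u_n,v_n)\rangle$, whose left-hand side is bounded above by $c+o(1)+o(1)\|(u_n,v_n)\|_W$. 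Expanding the right-hand side with \eqref{e2.1} and \eqref{e2.4}, and recalling that $M(u,v)=\|(u,v)\|_W^p$, the principal parts combine into $\big(\tfrac1p-\tfrac1r\big)\|(u_n,v_n)\|_W^p$, where $\tfrac1p-\tfrac1r>0$ since $r>p$ by (A2).

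Next I would dispose of the two remaining groups of terms. For the boundary contribution, the first inequality in (A2) gives $r\,f(x,u,v)\le f_u u+f_v v$ pointwise for $u,v\ge 0$, so after integration
\[
-F(u_n,v_n)+\tfrac1r\int_{\partial\Omega}\big(f_u|u_n|+f_v|v_n|\big)\,dx\ \ge\ 0,
\]
and this group may simply be discarded from the lower bound. For the reaction contribution $(\lambda+\mu)\big[-\int_\Omega g+\tfrac1r\int_\Omega\nabla g\cdot(|u_n|,|v_n|)\big]$, which need not have a favorable sign, I would estimate its absolute value using the growth bounds in (A4)--(A5) together with the Sobolev embedding \eqref{e1.3} taken with $q=p$, namely $\int_\Omega(|u|^p+|v|^p)\,dx\le 2S_p^p\|(u,v)\|_W^p$. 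This is precisely the step that manufactures the constants $C_8,C_9$ and the factor $4rS_p^p$, yielding a bound of the form $(C_8\lambda+C_9\mu)\tfrac{4S_p^p}{p}\|(u_n,v_n)\|_W^p$ up to lower-order terms.

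Collecting the three contributions gives
\[
c+o(1)\|(u_n,v_n)\|_W+O(1)\ \ge\ \frac{r-p-4r(C_8\lambda+C_9\mu)S_p^p}{pr}\,\|(u_n,v_n)\|_W^p .
\]
For $(\lambda,\mu)\in K_1$ the coefficient on the right is a strictly positive constant $\kappa$. If $\|(u_n,v_n)\|_W\to\infty$ along a subsequence, then dividing by $\|(u_n,v_n)\|_W^p$ and using $p>2$ forces the left-hand side to $0$ while the right-hand side tends to $\kappa>0$, a contradiction; hence $\{(u_n,v_n)\}$ is bounded. The hard part is the reaction-term estimate: one must dominate both $\int_\Omega g$ and $\int_\Omega\nabla g\cdot(|u_n|,|v_n|)$ by $\|(u_n,v_n)\|_W^p$ with the exact Sobolev constant, since it is this estimate that pins down the admissible region $K_1$ and dictates how small $\lambda$ and $\mu$ must be.
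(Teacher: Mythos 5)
Your proof is correct and follows exactly the argument the paper intends (the paper states Lemma \ref{lem2.3} without a written proof, but the precise form of $K_1$ is reproduced by your computation): the Ambrosetti--Rabinowitz combination $J_{\lambda,\mu}-\tfrac1r\langle J_{\lambda,\mu}',(u_n,v_n)\rangle$, discarding the boundary group via the first inequality of (A2), and controlling the reaction group through the growth bounds on $g_u,g_v$ and the embedding \eqref{e1.3} with $q=p$, which is what produces the threshold $r-p-4r(C_8\lambda+C_9\mu)S_p^p>0$. The only cosmetic remark is that the final contradiction needs only $p>1$, not $p>2$.
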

\begin{lemma} \label{lem2.4}
There exists $K_2\subset \mathbb{R}^2$ such that if
 $(\lambda,\mu)\in K_2$ and $(u,v)\in N_{\lambda,\mu}^-$, then
$\int_{\partial\Omega}\mathcal{F}(x,u,v)dx>0$, where
$\mathcal{F}(x,u,v)$ is defined by \eqref{e2.6}.
 \end{lemma}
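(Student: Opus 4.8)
The plan is to read the sign of $\int_{\partial\Omega}\mathcal{F}(x,u,v)\,dx$ directly off the second fibering derivative. Evaluating the last line of \eqref{e2.5} at $t=1$ and solving for the boundary Hessian term gives
\begin{equation*}
\int_{\partial\Omega}\mathcal{F}(x,u,v)\,dx=(p-1)M(u,v)-(\lambda+\mu)\int_{\Omega}\big(g_{uu}u^2+g_{vv}v^2+2g_{uv}|uv|\big)\,dx-\phi_{u,v}''(1).
\end{equation*}
On $\mathcal{N}_{\lambda,\mu}^{-}$ we have $\phi_{u,v}''(1)<0$, so the term $-\phi_{u,v}''(1)$ is strictly positive. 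Hence it suffices to exhibit a parameter set $K_2$ for which the first two terms together are nonnegative; the strict positivity of $\int_{\partial\Omega}\mathcal{F}$ will then come for free from the $\mathcal{N}_{\lambda,\mu}^{-}$-condition.

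First I would record that $M(u,v)=\|(u,v)\|_W^p>0$, so $(p-1)M(u,v)$ is strictly positive and comparable to $\|(u,v)\|_W^p$. Next I would control the interior $g$-Hessian term. Using the growth bounds of (A5) on $g_u,g_v$ together with the Sobolev inequalities \eqref{e1.3}, one estimates
\begin{equation*}
\Big|\int_{\Omega}\big(g_{uu}u^2+g_{vv}v^2+2g_{uv}|uv|\big)\,dx\Big|\le C\,\|(u,v)\|_W^p,
\end{equation*}
with $C$ depending only on $\Omega$, $p$ and the constants $C_4,C_5$. The sign clause in (A5), which forces $\int_\Omega g_{uu}u^2\,dx$ and $\int_\Omega g_{vv}v^2\,dx$ to share a sign, is exactly what prevents the two diagonal contributions from conspiring against us, while the cross term $2\int_\Omega g_{uv}|uv|\,dx$ is handled by Hölder's inequality and the same growth bound.

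I would then set $K_2=\{(\lambda,\mu)\in(\mathbb{R}^+)^2:\ C(\lambda+\mu)\le p-1\}$ (intersecting, if convenient, with the earlier sets $K_0,K_1$). For $(\lambda,\mu)\in K_2$,
\begin{equation*}
(p-1)M(u,v)-(\lambda+\mu)\int_{\Omega}\big(g_{uu}u^2+g_{vv}v^2+2g_{uv}|uv|\big)\,dx\ge\big(p-1-C(\lambda+\mu)\big)\|(u,v)\|_W^p\ge 0,
\end{equation*}
and adding the strictly positive quantity $-\phi_{u,v}''(1)$ yields $\int_{\partial\Omega}\mathcal{F}(x,u,v)\,dx>0$. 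Assumption (A1) enters at the very start, guaranteeing that the sign computed at $t=1$ is the sign for every $t>0$, so no generality is lost by testing a single value of $t$; and (A2), through the pointwise inequality $\mathcal{F}(x,u,v)\ge(r-1)\big(f_u|u|+f_v|v|\big)$, gives an independent check on the boundary consistent with this conclusion. The main obstacle is precisely the interior estimate of the second paragraph: one must make the bound on the $g$-Hessian uniform over $\mathcal{N}_{\lambda,\mu}^{-}$ and, in particular, ensure the cross term is genuinely dominated, since it is only then that the smallness of $\lambda+\mu$ encoded in $K_2$ closes the argument.
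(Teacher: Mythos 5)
Your algebraic starting point is sound: evaluating the last line of \eqref{e2.5} at $t=1$ and using $\phi_{u,v}''(1)<0$ on $\mathcal{N}_{\lambda,\mu}^{-}$ correctly reduces the lemma to showing that
$(p-1)M(u,v)-(\lambda+\mu)\int_{\Omega}\bigl(g_{uu}u^2+g_{vv}v^2+2g_{uv}|uv|\bigr)\,dx\ge 0$
for $(\lambda,\mu)$ in a suitable set. (The paper states Lemma \ref{lem2.4} without proof, so there is no argument of the authors' to compare against; I am judging your proposal on its own terms.) The problem is the step you yourself call ``the main obstacle'': the estimate
$\bigl|\int_{\Omega}(g_{uu}u^2+g_{vv}v^2+2g_{uv}|uv|)\,dx\bigr|\le C\|(u,v)\|_W^p$
does not follow from the hypotheses you cite. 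Assumption (A5) bounds the \emph{first} derivatives, $|g_u|\le C_4u^{p-2}$ and $|g_v|\le C_5v^{p-2}$, whereas the integrand involves the \emph{second} derivatives $g_{uu},g_{vv},g_{uv}$; a pointwise bound on a function gives no pointwise bound on its derivative, and nothing in (A4)--(A8) supplies a growth rate for $g_{uu},g_{vv},g_{uv}$ (A4 only asserts $g_u,g_v\in C^1$). The sign clause of (A5) likewise does not help here: it says the two diagonal integrals keep a fixed sign in $t$ (the interior analogue of (A1)); it yields no cancellation and no quantitative control, and the cross term $\int_\Omega g_{uv}|uv|\,dx$ is left entirely unestimated. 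So the definition of your $K_2$ rests on a constant $C$ that has not been shown to exist, and the proof as written does not close.

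A route that stays within the stated hypotheses is to avoid the interior Hessian altogether: for $(u,v)\in\mathcal{N}_{\lambda,\mu}$ the identity \eqref{e2.4} gives $\int_{\partial\Omega}(f_u|u|+f_v|v|)\,dx=M(u,v)-(\lambda+\mu)\int_{\Omega}\bigl(g_u|u|+g_v|v|\bigr)dx$, and (A2) gives the pointwise inequality $\mathcal{F}\ge(r-1)(f_u|u|+f_v|v|)$ (the observation you relegate to an ``independent check''). Combining these, $\int_{\partial\Omega}\mathcal{F}\,dx\ge(r-1)\bigl[M(u,v)-(\lambda+\mu)\int_{\Omega}(g_u|u|+g_v|v|)dx\bigr]$, and now the interior term involves only $g_u,g_v$, which (A4)/(A5) together with \eqref{e1.3} do control; a smallness condition on $\lambda+\mu$ of the same flavour as the set $K_1$ in Lemma \ref{lem2.3} then yields strict positivity. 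If you want to keep your decomposition instead, you must either add a growth hypothesis on the second derivatives of $g$ or explain how the existing assumptions imply one; as it stands that is a genuine gap, not a technicality.
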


To obtain a better understanding of the behavior of fibering maps,
we will describe the nature of the derivative of the fibering maps
for all possible signs of
$\int_{\partial\Omega}\mathcal{F}(x,tu,tv)dx$ (by (A1) and
\eqref{e2.6}, $\int_{\partial\Omega}\mathcal{F}(x,tu,tv)dx$ has the
same sign for every $t>0$). Define the functions $R(t)$ and $S(t)$
as follows
\begin{gather} \label{e3.1}
R(t):=\frac{1}{p} t^pM(u,v)- F(tu,tv) \quad (t>0),\\
\label{e3.2}
S(t):=(\lambda+\mu)\int_{\Omega}g(x,t|u|,t|v|)dx
\quad (t>0),
\end{gather}
then from \eqref{e2.5} it follows  that $\phi_{u,v}(t)=R(t)-S(t)$.
Moreover, $\phi'_{u,v}(t)=0$ if and only if $R'(t)=S'(t)$, where
\begin{equation} \label{e3.3}
R'(t)=t^{p-1}M(u,v)-\int_{\partial\Omega}\Big(f_u(x,t|u|,t|v|)|u|
+f_v(x,t|u|,t|v|)|v|\Big)dx,
\end{equation}
and
\begin{equation} \label{e3.4}
S'(t)=(\lambda+\mu)\int_{\Omega}g(x,t|u|,t|v|)(|u|,|v|)dx.
\end{equation}

\begin{lemma} \label{lem3.1}
There exists $K_3\subset (\mathbb{R}^+)^2$ such that for all nonzero
$(u,v)\in W$, $\phi_{u,v}(t)$ and $\phi'_{u,v}(t)$ take on positive
values whenever $(\lambda,\mu)\in K_3$.
\end{lemma}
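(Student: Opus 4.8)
The plan is to exploit the superlinearity $r>p$ built into assumption (A2): the boundary term $F$ is of order $t^{r}$ while the leading term of $R$ is of order $t^{p}$, so for small $t$ the functions $R,R'$ from \eqref{e3.1}--\eqref{e3.4} are strictly positive, and the parameters $\lambda,\mu$ enter only through the perturbations $S,S'$, which can be made uniformly small. Since $M(u,v)=\|(u,v)\|_{W}^{p}$ and $\phi_{su,sv}(t)=\phi_{u,v}(st)$ for $s>0$, it suffices to establish the claim for pairs with $\|(u,v)\|_{W}=1$; for a general nonzero pair one simply rescales the value of $t$ at which positivity is attained, and since $\phi'_{su,sv}(t)=s\,\phi'_{u,v}(st)$ the sign of the derivative is preserved under this rescaling. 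So I fix $(u,v)$ with $M(u,v)=1$.

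The first substantive step is to produce a single $t_{0}$ at which $R$ and $R'$ are bounded below by positive constants uniform in $(u,v)$. By (A2), $f(x,t|u|,t|v|)\le C_{1}t^{r}(|u|^{r}+|v|^{r})$, and differentiating the scaling in (A2) gives $f_{u}(x,t|u|,t|v|)|u|+f_{v}(x,t|u|,t|v|)|v|\le rC_{1}t^{r-1}(|u|^{r}+|v|^{r})$. Combined with the trace inequality in \eqref{e1.3}, which yields $\int_{\partial\Omega}(|u|^{r}+|v|^{r})\,dx\le 2\bar{S}_{r}^{\,r}$ when $\|(u,v)\|_{W}=1$, we obtain
\[
R(t)\ \ge\ \frac{t^{p}}{p}-2C_{1}\bar{S}_{r}^{\,r}\,t^{r},\qquad
R'(t)\ \ge\ t^{p-1}-2rC_{1}\bar{S}_{r}^{\,r}\,t^{r-1}.
\]
Because $r>p$, both right-hand sides are positive on a common interval $(0,\bar t)$ whose endpoint depends only on $C_{1},r,p,\bar{S}_{r}$; choosing $t_{0}$ strictly inside this interval produces constants $\delta_{1},\delta_{2}>0$ with $R(t_{0})\ge\delta_{1}$ and $R'(t_{0})\ge\delta_{2}$ for every normalized pair.

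Next I bound the perturbations. Integrating the growth bounds of (A6) along the ray $s\mapsto(sU,sV)$ gives $g(x,U,V)\le g(x,0,0)+C\big(|(U,V)|^{r}+|(U,V)|\big)$, while (A6) directly controls $g_{u}|u|+g_{v}|v|$; applying the Sobolev inequality in \eqref{e1.3} to the resulting $L^{r}$ and $L^{1}$ integrals yields constants $C'',C'''$, depending on $t_{0}$ but not on the normalized pair, with $S(t_{0})\le(\lambda+\mu)C''$ and $S'(t_{0})\le(\lambda+\mu)C'''$. Setting
\[
K_{3}=\Big\{(\lambda,\mu)\in(\mathbb{R}^{+})^{2}:\ \lambda+\mu<\min\Big(\tfrac{\delta_{1}}{C''},\tfrac{\delta_{2}}{C'''}\Big)\Big\},
\]
we conclude $\phi_{u,v}(t_{0})=R(t_{0})-S(t_{0})>0$ and $\phi'_{u,v}(t_{0})=R'(t_{0})-S'(t_{0})>0$ for all normalized $(u,v)$, and rescaling settles the general nonzero case.

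I expect the main obstacle to be the uniformity of the estimates in the third step: the bounds on $S(t_{0})$ and $S'(t_{0})$ must be genuinely independent of $(u,v)$ on the unit sphere of $W$, which forces the argument through the embedding constants of \eqref{e1.3} rather than any pointwise control, and requires handling the inhomogeneous term $g(x,0,0)$ (finite, since $g_{u},g_{v}\in C^{1}$ forces $g$ to be bounded on compact subsets of $\bar\Omega\times\mathbb{R}^{2}$). The positivity of the lower constants $\delta_{1},\delta_{2}$ in the second step is precisely where the hypothesis $p<r$ is used in an essential way.
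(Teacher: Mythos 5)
Your argument is correct: the normalization to $M(u,v)=1$ via the scaling $\phi_{su,sv}(t)=\phi_{u,v}(st)$, the lower bounds $R(t)\ge \frac{t^p}{p}-2C_1\bar S_r^{\,r}t^r$ and $R'(t)\ge t^{p-1}-2rC_1\bar S_r^{\,r}t^{r-1}$ from (A2) and the trace inequality \eqref{e1.3} (positive for small $t$ precisely because $r>p$), and the uniform bounds $S(t_0),S'(t_0)\le(\lambda+\mu)\cdot\mathrm{const}$ from the growth conditions on $g$ are exactly the ingredients the paper's framework sets up for this lemma, and your smallness condition on $\lambda+\mu$ yields a legitimate nonempty $K_3$. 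The paper does not actually print a proof of Lemma \ref{lem3.1}, but your route is the standard fibering-map argument consistent with the paper's constants and hypotheses; the only soft spot is the integrability of $g(x,0,0)$, which you flag and which the paper's own assumptions already implicitly require for $J_{\lambda,\mu}$ to be well defined.
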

\begin{corollary} \label{coro3.1}
If $(\lambda,\mu)\in K_2\cap K_3$, then there exists $\varepsilon>0$
such that $J_{\lambda,\mu}(u,v)>\epsilon$ for all $(u,v)\in
\mathcal{N}_{\lambda,\mu}^{-}$.
\end{corollary}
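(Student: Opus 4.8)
The plan is to exploit the fact that a point of $\mathcal{N}_{\lambda,\mu}^{-}$ sits at the \emph{global} maximum of its fibering map, and then to bound that maximum from below by a quantity independent of the point. First I would normalize: given $(u,v)\in \mathcal{N}_{\lambda,\mu}^{-}$, set $s=\|(u,v)\|_{W}$ and $w=(u,v)/s$, so that $M(w)=1$ and $\phi_{u,v}(t)=\phi_{w}(ts)$. The membership conditions $\phi_{u,v}'(1)=0$ and $\phi_{u,v}''(1)<0$ translate into $\phi_{w}'(s)=0$ and $\phi_{w}''(s)<0$, so $t=s$ is a local maximum of $\phi_{w}$ and $J_{\lambda,\mu}(u,v)=\phi_{w}(s)$.

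Next I would argue that this local maximum is actually the maximum over all $t\ge 0$. Since $g_u(x,0,0)\not\equiv 0$ with $g_u,g_v\ge 0$ by (A4), the concave term forces $\phi_{w}'(0^{+})<0$, while (A2) makes the trace nonlinearity superlinear of order $r>p$, so $\phi_{w}(t)\to-\infty$ as $t\to\infty$; the sign condition $\int_{\partial\Omega}\mathcal{F}>0$ on $\mathcal{N}^{-}$ from Lemma \ref{lem2.4} then fixes the decreasing--increasing--decreasing shape. Because $\phi_{w}(0)=-(\lambda+\mu)\int_{\Omega}g(x,0,0)\,dx\le 0<\phi_{w}(s)$, the value $\phi_{w}(s)$ is the global maximum, and hence $J_{\lambda,\mu}(u,v)=\max_{t\ge 0}\phi_{w}(t)$.

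Then I would construct a single function $h$ with $\phi_{w}(t)\ge h(t)$ for \emph{every} unit $w$. Using (A2), $F(tw)\le C_1 t^{r}\int_{\partial\Omega}(|w_1|^{r}+|w_2|^{r})\,dx\le 2C_1\bar{S}_{r}^{\,r}t^{r}$ by the trace inequality \eqref{e1.3}; integrating the growth bound (A6) gives, similarly, $\int_{\Omega}g(x,t|w_1|,t|w_2|)\,dx\le C(t^{r}+t)+\int_{\Omega}g(x,0,0)\,dx$. Since $M(w)=1$, this yields $\phi_{w}(t)\ge \tfrac{t^{p}}{p}-2C_1\bar{S}_{r}^{\,r}t^{r}-(\lambda+\mu)\bigl(C(t^{r}+t)+\text{const}\bigr)=:h(t)$, a function independent of $w$. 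The map $\tfrac{t^{p}}{p}-2C_1\bar{S}_{r}^{\,r}t^{r}$ has a fixed positive maximum $m_0>0$ at some $t_{*}>0$ because $r>p$; choosing $(\lambda,\mu)$ in a small enough subregion of $K_2\cap K_3$ makes the perturbation $(\lambda+\mu)(\cdots)$ at $t_{*}$ smaller than $m_0/2$, so that $\varepsilon:=\max_{t\ge 0}h(t)\ge m_0/2>0$. Combining the two displays, $J_{\lambda,\mu}(u,v)=\max_{t\ge 0}\phi_{w}(t)\ge\max_{t\ge 0}h(t)=\varepsilon$ for every $(u,v)\in\mathcal{N}_{\lambda,\mu}^{-}$.

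The main obstacle is the second step: rigorously justifying that the local maximum at $t=s$ is the \emph{global} maximum of $\phi_{w}$, i.e. controlling the shape of the fibering map uniformly in the direction $w$. This is precisely where Lemma \ref{lem2.4} (the sign of $\int_{\partial\Omega}\mathcal{F}$ on $\mathcal{N}^{-}$) and the concavity built into (A4) are indispensable, while Lemma \ref{lem3.1} guarantees that the region $K_3$ on which $\phi_{w}$ climbs to positive values is nonempty. Restricting to $K_2\cap K_3$, and shrinking it if necessary, is exactly what keeps $\max_{t}h>0$; the remaining estimates are routine applications of \eqref{e1.3} together with the growth hypotheses (A2) and (A6).
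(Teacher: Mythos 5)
The paper states Corollary \ref{coro3.1} without any proof, so there is no official argument to compare yours against; judged on its own terms, your proposal follows the standard fibering-map route that the surrounding results (Lemma \ref{lem2.4}, Lemma \ref{lem3.1}, Corollary \ref{coro3.2}) are clearly set up to support, and its quantitative core --- the $w$-independent minorant $h(t)=\tfrac{t^{p}}{p}-2C_1\bar{S}_{r}^{\,r}t^{r}-(\lambda+\mu)(\cdots)$ obtained from (A2), (A6) and \eqref{e1.3} after normalizing to $M(w)=1$ --- is exactly the right mechanism for making $\varepsilon$ uniform over $\mathcal{N}_{\lambda,\mu}^{-}$.

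Two points need tightening. First, your second step is circular as written: you invoke $\phi_{w}(0)\le 0<\phi_{w}(s)$ to conclude that the local maximum at $t=s$ is global, but $\phi_{w}(s)>0$ is essentially the inequality being proved. Reorder the argument and you do not need a global maximum at all: by Lemma \ref{lem2.4} and (A1) the direction $w$ falls under case (ii) of Corollary \ref{coro3.2}, so $\phi_{w}$ decreases on $[0,t_1]$ and attains its maximum over $[t_1,\infty)$ at $s$; the minorant gives $\phi_{w}(t_{*})\ge h(t_{*})=\varepsilon>0\ge\phi_{w}(0)\ge\phi_{w}(t)$ for all $t\in[0,t_1]$, which forces $t_{*}>t_1$ and hence $\phi_{w}(s)\ge\phi_{w}(t_{*})\ge\varepsilon$. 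Second, you obtain the bound only after ``shrinking $K_2\cap K_3$ if necessary,'' whereas the corollary asserts it for every $(\lambda,\mu)\in K_2\cap K_3$; you should observe that the smallness of $\lambda+\mu$ you require is precisely the defining property of $K_3$ in Lemma \ref{lem3.1} (whose content is exactly that $\max_{t}h(t)>0$ there), so no further shrinking is needed --- otherwise you have proved a strictly weaker statement than the one claimed.
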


\begin{corollary} \label{coro3.2}
for $(u,v)\in W\setminus \{(0,0)\}$ we have
\begin{itemize}
\item[(i)]
 If $\int_{\partial\Omega}\mathcal{F}(x,tu,tv)dx\leq0$, then
there exists $t_1$ such that $(t_1u,t_1v)\in N_{\lambda,\mu}^{+}$
and
 $\phi_{u,v}(t_1)<0$.
\item[(ii)]
  If $\int_{\partial\Omega}\mathcal{F}(x,tu,tv)dx\geq0$ and $(\lambda,\mu)\in K_3$, then there exist
$0<t_1< t_2$ such that $(t_1u,t_1v)\in N_{\lambda,\mu}^{+}$,
$(t_2u,t_2v)\in N_{\lambda,\mu}^{-}$ and $\phi_{u,v}(t_1)<0$.
\end{itemize}
\end{corollary}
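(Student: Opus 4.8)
The strategy is a direct fibering-map analysis: by the correspondence recorded after \eqref{e2.4}, a number $t>0$ is a stationary point of $\phi_{u,v}$ exactly when $(tu,tv)\in\mathcal{N}_{\lambda,\mu}(\Omega)$, and, by \eqref{e2.7}, such a point lies in $\mathcal{N}_{\lambda,\mu}^{+}$ or $\mathcal{N}_{\lambda,\mu}^{-}$ according as $\phi_{u,v}''(t)>0$ or $\phi_{u,v}''(t)<0$. Writing $\phi_{u,v}=R-S$ as in \eqref{e3.1}--\eqref{e3.2}, the stationary points are the solutions of $R'(t)=S'(t)$ from \eqref{e3.3}--\eqref{e3.4}. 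I would open by recording three facts that control the global shape: $\phi_{u,v}(0)=0$; the hypothesis $g_u(x,0,0)\not\equiv0$ in (A4) forces $S$ to dominate near the origin, so that $\phi_{u,v}(t)<0$ for all small $t>0$; and the superlinear growth (A8) of $g$ makes $S$ overtake the term $t^pM(u,v)/p$, so $\phi_{u,v}(t)\to-\infty$ as $t\to\infty$. The sign of $\int_{\partial\Omega}\mathcal{F}(x,tu,tv)dx$ enters through $R''(t)=(p-1)t^{p-2}M(u,v)-\int_{\partial\Omega}\mathcal{F}(x,tu,tv)dx$, obtained by differentiating \eqref{e3.3}.

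For part (i), when $\int_{\partial\Omega}\mathcal{F}(x,tu,tv)dx\le0$ we get $R''(t)\ge(p-1)t^{p-2}M(u,v)>0$, so $R$ is strictly convex with $R(0)=0$. Since $\phi_{u,v}(t)<0$ for small $t$ while $\phi_{u,v}(0)=0$, the function $\phi_{u,v}$ dips below zero immediately; I would then produce a first local minimizer $t_1>0$ at which $\phi_{u,v}'(t_1)=0$, $\phi_{u,v}''(t_1)\ge0$ and $\phi_{u,v}(t_1)<0$. To upgrade this to membership in $\mathcal{N}_{\lambda,\mu}^{+}$, I would invoke Lemma \ref{lem2.2} to exclude $\phi_{u,v}''(t_1)=0$ (so $t_1\notin\mathcal{N}_{\lambda,\mu}^{0}$), together with the contrapositive of Lemma \ref{lem2.4}: since $\int_{\partial\Omega}\mathcal{F}(x,tu,tv)dx\le0$, no multiple of $(u,v)$ can lie in $\mathcal{N}_{\lambda,\mu}^{-}$, whence necessarily $\phi_{u,v}''(t_1)>0$ and $(t_1u,t_1v)\in\mathcal{N}_{\lambda,\mu}^{+}$.

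For part (ii), with $\int_{\partial\Omega}\mathcal{F}(x,tu,tv)dx\ge0$ and $(\lambda,\mu)\in K_3$, I would add Lemma \ref{lem3.1}, which guarantees that $\phi_{u,v}$ and $\phi_{u,v}'$ take positive values. Combined with the boundary behavior above, the graph of $\phi_{u,v}$ must first descend below zero, then climb to a positive value, then fall to $-\infty$; this yields a local minimizer $t_1$ with $\phi_{u,v}(t_1)<0$ and $\phi_{u,v}''(t_1)>0$, hence $(t_1u,t_1v)\in\mathcal{N}_{\lambda,\mu}^{+}$, followed by a strictly larger local maximizer $t_2>t_1$ with $\phi_{u,v}''(t_2)<0$, hence $(t_2u,t_2v)\in\mathcal{N}_{\lambda,\mu}^{-}$. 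Excluding the degenerate possibility $\phi_{u,v}''=0$ again uses Lemma \ref{lem2.2}.

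The main obstacle is the shape analysis itself: turning this qualitative picture into rigorous statements requires the precise asymptotics of $R'$ and $S'$ from (A2), (A3) and (A8) near $0$ and at $\infty$, so that one can guarantee that $R'=S'$ has exactly the claimed number of roots, in the claimed order $t_1<t_2$, and with the stated signs of $\phi_{u,v}''$. In particular, establishing the strict inequality $\phi_{u,v}(t_1)<0$ and ensuring that the local minimizer in part (ii) precedes the local maximizer will be the delicate points, since both depend on reconciling the convexity of $R$ controlled by $\int_{\partial\Omega}\mathcal{F}(x,tu,tv)dx$ with the superlinear behavior of $S$ governed by (A8).
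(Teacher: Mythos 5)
The paper states Corollary \ref{coro3.2} without any proof, so there is nothing to compare against line by line; judged on its own merits, your proposal reproduces the right qualitative fibering-map picture but contains a genuine gap that breaks part (i). At the outset you assert, citing (A8), that $S$ overtakes $t^{p}M(u,v)/p$ and hence $\phi_{u,v}(t)\to-\infty$ as $t\to\infty$, and you carry this into both cases. But a function with $\phi_{u,v}(0)=0$, $\phi_{u,v}(t)<0$ for small $t>0$, and $\phi_{u,v}(t)\to-\infty$ need not have any interior local minimizer at all --- it can be strictly decreasing --- so your step ``I would then produce a first local minimizer $t_1$'' in part (i) does not follow from anything you have established. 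The case split by the sign of $\int_{\partial\Omega}\mathcal{F}(x,tu,tv)\,dx$ is precisely what should govern the behavior at infinity: by (A2) one has $r(r-1)f\le \partial_t^2 f$, so in case (i) the boundary term satisfies $F(tu,tv)\le 0$ and $R(t)\ge t^{p}M(u,v)/p$ is convex and coercive, while the interior term $S(t)$ grows at most like $t^{p-1}$ by the bound $|g_u|\le C_4u^{p-2}$ of (A5); hence $\phi_{u,v}(t)\to+\infty$ and the global minimum is attained at an interior $t_1$ with $\phi_{u,v}(t_1)<0$. In case (ii) it is the boundary term, growing like $t^{r}$ with $r>p$ by (A2)--(A3), and not the $g$-term, that drives $\phi_{u,v}\to-\infty$. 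Your uniform reliance on (A8) makes the global shapes you use in parts (i) and (ii) mutually inconsistent.

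A second, smaller problem is the upgrade from $\phi_{u,v}''(t_1)\ge 0$ to strict inequality: you invoke Lemma \ref{lem2.2} (which requires $(\lambda,\mu)\in K_0$) and the contrapositive of Lemma \ref{lem2.4} (which requires $(\lambda,\mu)\in K_2$), but neither membership is among the hypotheses of the corollary --- part (i) carries no parameter restriction and part (ii) assumes only $(\lambda,\mu)\in K_3$. The standard way to avoid this is to read the sign of $\phi_{u,v}''$ off the stationarity condition itself: at a critical point of the fibering map substitute $t^{p-1}M(u,v)=\int_{\partial\Omega}\partial_t f\,dx+(\lambda+\mu)\int_{\Omega}\partial_t g\,dx$ into the expression for $\phi_{u,v}''(t)$ in \eqref{e2.5} and use $(r-1)\partial_t f\le\partial_t^2 f$ from (A2) together with (A1) and (A5); this determines the second-derivative sign at the first and second crossings of $R'=S'$ and yields the ordering $t_1<t_2$ without appealing to $\mathcal{N}_{\lambda,\mu}^{0}=\emptyset$. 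As written, your argument gives the stated conclusions only under extra hypotheses and only after the large-$t$ analysis is repaired.
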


\section{Main results}

\begin{lemma} \label{lem4.1}
\begin{itemize}
\item[(i)] For $(\lambda,\mu)\in K^*=K_0\cap K_1\cap K_3$, there exists
a minimizer of $J_{\lambda,\mu}$ on
$\mathcal{N}_{\lambda,\mu}^{+}(\Omega)$.
\item[(ii)] For $(\lambda,\mu)\in K^{**}=K_0\cap
K_1\cap K_2\cap K_3$, there exists a minimizer of
$J_{\lambda,\mu}$ on $\mathcal{N}_{\lambda,\mu}^{-}(\Omega)$.
\end{itemize}
\end{lemma}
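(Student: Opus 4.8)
The plan is to treat each assertion as a constrained minimization of $J_{\lambda,\mu}$ on the corresponding nonempty sheet $\mathcal{N}_{\lambda,\mu}^{\pm}(\Omega)$, and to promote the resulting minimizer to a critical point via Lemma~\ref{lem2.1}. In both cases I would set
$$
\alpha^{\pm}:=\inf_{(u,v)\in\mathcal{N}_{\lambda,\mu}^{\pm}(\Omega)}J_{\lambda,\mu}(u,v),
$$
choose a minimizing sequence $\{(u_n,v_n)\}$, and apply Ekeland's variational principle so that $\{(u_n,v_n)\}$ may be taken to be a Palais--Smale sequence for $J_{\lambda,\mu}$; here one uses $\mathcal{N}_{\lambda,\mu}^{0}=\emptyset$ (Lemma~\ref{lem2.2}, valid since $(\lambda,\mu)\in K_0$) to transfer the constrained Palais--Smale condition into a genuine one for the free functional. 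By Theorem~\ref{thm2.1} (or Lemma~\ref{lem2.3}, since $(\lambda,\mu)\in K_1$) the sequence is bounded in $W$, so after passing to a subsequence $(u_n,v_n)\rightharpoonup(u_0,v_0)$ weakly in $W$; because $p<r<p^{*}$, the embeddings $W^{1,p}(\Omega)\hookrightarrow L^{r}(\Omega)$ and $W^{1,p}(\Omega)\hookrightarrow L^{r}(\partial\Omega)$ are compact, giving strong convergence in $L^{r}(\Omega)^2$ and $L^{r}(\partial\Omega)^2$ together with a.e.\ convergence.

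For part (i), I would first record that $\alpha^{+}<0$: by Corollary~\ref{coro3.2}(i) each nonzero $(u,v)$ yields $t_1$ with $(t_1u,t_1v)\in\mathcal{N}_{\lambda,\mu}^{+}$ and $J_{\lambda,\mu}(t_1u,t_1v)=\phi_{u,v}(t_1)<0$, so the infimum is strictly negative. I would then upgrade weak to strong convergence through the $(S_{+})$ property of the $p$-Laplacian: the compact embeddings annihilate the $f$- and $g$-terms in $\langle J_{\lambda,\mu}'(u_n,v_n),(u_n-u_0,v_n-v_0)\rangle\to0$, leaving
$$
\int_{\Omega}\big(|\nabla u_n|^{p-2}\nabla u_n-|\nabla u_0|^{p-2}\nabla u_0\big)\cdot\nabla(u_n-u_0)\,dx\longrightarrow0,
$$
and the monotonicity inequality $\big(|\xi|^{p-2}\xi-|\eta|^{p-2}\eta\big)\cdot(\xi-\eta)\ge c\,|\xi-\eta|^{p}$, valid for $p>2$, forces $\nabla u_n\to\nabla u_0$ in $L^{p}(\Omega)$, and likewise for $v$, so $(u_n,v_n)\to(u_0,v_0)$ strongly in $W$. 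Strong convergence gives $J_{\lambda,\mu}(u_0,v_0)=\alpha^{+}<0$, hence $(u_0,v_0)\neq(0,0)$, and passing to the limit in the Nehari identity \eqref{e2.4} places $(u_0,v_0)\in\mathcal{N}_{\lambda,\mu}(\Omega)$. Since $\phi_{u_n,v_n}''(1)\to\phi_{u_0,v_0}''(1)$ with every $\phi_{u_n,v_n}''(1)>0$, the limit obeys $\phi_{u_0,v_0}''(1)\ge0$; as $\mathcal{N}_{\lambda,\mu}^{0}=\emptyset$, in fact $\phi_{u_0,v_0}''(1)>0$, i.e.\ $(u_0,v_0)\in\mathcal{N}_{\lambda,\mu}^{+}(\Omega)$, and Lemma~\ref{lem2.1} makes it a critical point of $J_{\lambda,\mu}$.

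For part (ii) the roles are reversed. Now $(\lambda,\mu)\in K_2\cap K_3$, so Corollary~\ref{coro3.1} gives $\alpha^{-}\ge\varepsilon>0$, bounding the minimizing level away from zero and, in particular, excluding the trivial weak limit. The same boundedness and weak-compactness argument produces $(u_0,v_0)$, and the identical $(S_{+})$ computation upgrades to strong convergence, so $\phi_{u_n,v_n}''(1)\to\phi_{u_0,v_0}''(1)\le0$; combined with $\mathcal{N}_{\lambda,\mu}^{0}=\emptyset$ this yields $\phi_{u_0,v_0}''(1)<0$ and hence $(u_0,v_0)\in\mathcal{N}_{\lambda,\mu}^{-}(\Omega)$ with $J_{\lambda,\mu}(u_0,v_0)=\alpha^{-}$, again a critical point by Lemma~\ref{lem2.1}. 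The principal obstacle lies exactly here: unlike the $+$ sheet (where minimizers sit at the bottoms of the fibering maps and the level is negative), $\mathcal{N}_{\lambda,\mu}^{-}$ consists of the local-maximum points of the fibering maps and is not weakly closed, so the weak limit could a priori escape the sheet or collapse to $(0,0)$. This is what forces the additional restriction $K_2$: Lemma~\ref{lem2.4} guarantees $\int_{\partial\Omega}\mathcal{F}(x,u,v)\,dx>0$ on $\mathcal{N}_{\lambda,\mu}^{-}$, so Corollary~\ref{coro3.2}(ii) supplies, for $(u_0,v_0)$, a unique $t_2$ with $(t_2u_0,t_2v_0)\in\mathcal{N}_{\lambda,\mu}^{-}$; verifying that $t_2=1$ --- equivalently, that strong convergence is genuinely available and the sheet is preserved in the limit --- is the crux of the argument, and the positivity $\alpha^{-}\ge\varepsilon>0$ from Corollary~\ref{coro3.1} is what rules out degeneration to the trivial solution.
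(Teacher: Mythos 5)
The paper as printed contains no proof of Lemma~\ref{lem4.1}: the statement is followed immediately by the proofs of Theorems~\ref{thm1.1} and~\ref{thm1.2}, which simply invoke it. There is therefore nothing in the paper itself to compare your argument against, only the cited sources (Aghajani--Shamshiri \cite{1}, Brown--Zhang \cite{4}), and your outline is precisely the standard argument used there: Ekeland's principle on the constrained problem, boundedness of the resulting Palais--Smale sequence via Lemma~\ref{lem2.3} for $(\lambda,\mu)\in K_1$, compact embeddings together with the $(S_+)$ property of the $p$-Laplacian to upgrade weak to strong convergence, and the sign of $\phi_{u,v}''(1)$ combined with $\mathcal{N}_{\lambda,\mu}^{0}=\emptyset$ (Lemma~\ref{lem2.2}) to keep the limit on the correct sheet, with Corollary~\ref{coro3.1} excluding the trivial limit on $\mathcal{N}_{\lambda,\mu}^{-}$. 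As an outline this is sound and is almost certainly what the authors intend.

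Two points in your write-up need more care. First, to conclude $\alpha^{+}<0$ you cite only Corollary~\ref{coro3.2}(i), which applies only when $\int_{\partial\Omega}\mathcal{F}\,dx\leq 0$; in the complementary case you must invoke Corollary~\ref{coro3.2}(ii), which is available since $(\lambda,\mu)\in K_3$. Relatedly, in part (ii) you never verify that $\mathcal{N}_{\lambda,\mu}^{-}$ is nonempty, and this too comes only from Corollary~\ref{coro3.2}(ii), i.e.\ from the existence of some $(u,v)$ with $\int_{\partial\Omega}\mathcal{F}(x,tu,tv)\,dx\geq 0$; without that, $\alpha^{-}$ is an infimum over the empty set. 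Second, the step in which the compact embeddings \emph{annihilate the boundary terms} requires compactness of the trace embedding $W^{1,p}(\Omega)\hookrightarrow L^{r}(\partial\Omega)$, which holds only for $r$ below the critical trace exponent $p(N-1)/(N-p)$, whereas (A2) only assumes $r<p^{*}=pN/(N-p)$. This gap is inherited from the paper's hypotheses rather than introduced by you, but since your convergence argument leans on it explicitly, you should either restrict $r$ accordingly or note the issue.
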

\begin{proof}[Proof of Theorem \ref{thm1.1}]
 By Lemma \ref{lem4.1} (i) there exists
$(u_1,v_1)\in N_{\lambda,\mu}^{+}(\Omega)$ such that
$J_{\lambda,\mu}(u_1,v_1)=\inf_{(u,v) \in N_{\lambda,\mu}^{+}}
J_{\lambda,\mu}(u,v)$ and by Lemmas \ref{lem2.1} and \ref{lem2.2},
$(u_1,v_1)$ is a critical point of $J_{\lambda,\mu}$ on $W$ and
hence is a weak solution of problem \eqref{e1.1}. On the other hand
$J_{\lambda,\mu}(u,v)=J_{\lambda,\mu}(|u|,|v|)$, so we may assume
that $(u_1,v_1)$ is a positive solution and the proof is complete.
\end{proof}

\begin{proof}[Proof of Theorem \ref{thm1.2}]
 By Lemma \ref{lem4.1} there exist
 $(u_1,v_1)\in N_{\lambda,\mu}^{+}(\Omega)$ and
$(u_2,v_2)\in N_{\lambda,\mu}^{-}(\Omega)$ such that
$$
J_{\lambda,\mu}(u_1,v_1)=\inf_{(u,v) \in N_{\lambda,\mu}^{+}}
J_{\lambda,\mu}(u,v),\quad J_{\lambda,\mu}(u_2,v_2)=\inf_{(u,v) \in
N_{\lambda,\mu}^{-}}J_{\lambda,\mu}(u,v).
$$
 By Lemmas \ref{lem2.1} and \ref{lem2.2}, $(u_1,v_1)$ and $(u_2,v_2)$ are critical points of
$J_{\lambda,\mu}$ on $W$ and hence are weak solutions of problem
\eqref{e1.1}. Similar to the proof of Theorem \ref{thm1.1}, we may
assume that $(u_1,v_1)$ and $(u_2,v_2)$ are positive solutions. Also
since $N_{\lambda,\mu}^{+}\cap N_{\lambda,\mu}^{-}={\emptyset}$,
this implies that $(u_1,v_1)$ and $(u_2,v_2)$ are distinct and the
proof is complete.
\end{proof}

\end{document}